\newcommand{\wis}[1]{\mathbf{#1}}
\newcommand{\PP}{\mathbb{P}}
\newcommand{\C}{\mathbb{C}}
\newtheorem{theorem}{Theorem}
\newtheorem{lemma}{Lemma}
\title{The superpotential  $\mathbf{XYZ+XZY + \frac{c}{3} (X^3+Y^3+Z^3)}$}
\author{Lieven Le Bruyn} 
\address{Department of Mathematics, University of Antwerp \\ 
 Middelheimlaan 1, B-2020 Antwerp (Belgium) \\ {\tt lieven.lebruyn@uantwerpen.be}}
\begin{document}
\sloppy

\maketitle

\begin{abstract}
The motivic Donaldson-Thomas series associated to an elliptic Sklyanin algebra corresponding to a point of order two differs from the conjectured series in \cite[Conjecture 3.4]{Cazz}.
\end{abstract}

\section{Introduction}

A {\em $3$-dimensional elliptic Sklyanin algebra} $S=S_{a,b,c}$ is a quotient of the free algebra $\C \langle X,Y,Z \rangle$ modulo the graded ideal generated by the three quadratic relations
\[
\begin{cases}
a XY + b YX + c Z^2 &= 0 \\
a YZ + b ZY + c X^2 &= 0 \\
a ZX + b XZ + c Y^2 &= 0 
\end{cases}
\]
If $abc \not= 0$ and $3(abc)^3 \not= (a^3+b^3+c^3)^3$ these algebras have excellent ringtheoretic and homological properties, as proved by M. Artin, J. Tate and M. Van den Bergh in \cite{ATV1},\cite{ATV2}. They are determined by the plane elliptic curve 
\[
E_{pt}~:~(a^3+b^3+c^3)XYZ - abc (X^3+Y^3+Z^3) = 0 \subset \mathbb{P}^2 \]
 and translation by the point $\tau=[a:b:c] \in E_{pt}$ on it. The tools of noncommutative projective algebraic geometry have been used to classify the finite dimensional simple representations of $S_{a,b,c}$ in case $\tau \in E_{pt}$ is a point of finite order, see \cite{Walton}, \cite{Kevin}, and more recently \cite{Walton2}. We recall these result in section~\ref{simples} and make them explicit in the case when $\tau$ has order two, using the theory of Clifford algebras.
 
 \vskip 2mm
 
 The Sklyanin algebra $S_{a,b,c}$ can also be realized as the Jacobi algebra associated to the superpotential
 \[
 W = a XYZ + b XZY + \frac{c}{3}(X^3+Y^3+Z^3) \]
 That is, if $\partial_V$ denotes the cyclic derivative with respect to the variable $V$, then
 \[
 S_{a,b,c} = \frac{\C \langle X,Y,Z \rangle}{(\partial_X(W),\partial_Y(W),\partial_Z(W))} \]
$Tr(W)$ determines the Chern-Simons functional $M_n(\C) \oplus M_n(C) \oplus M_n(\C) \rTo \C$ and for every $\lambda \in \C$ we will denote by $\mathbb{M}^W_n(\lambda)$ the fiber $Tr(W)^{-1}(\lambda)$. Because the degeneracy locus of $Tr(W)$ coincides with the scheme of $n$-dimensional representations of $S_{a,b,c}$ it is conjectured in \cite{Cazz} that the motivic Donaldson-Thomas series
\[
U_W(t) = \sum_{n=0}^\infty \mathbf{L}^{\frac{-2n^2}{2}} \frac{[\mathbb{M}^W_n(0)]-[\mathbb{M}^W_n(1)]}{[GL_n]} t^n \]
is determined by the virtual motives of simple representations of $S_{a,b,c}$. If $\tau$ has order $n$ and $(n,3)=1$ it is known that apart from the trivial $1$-dimensional representation all finite dimensional simple representations of $S_{a,b,c}$ have dimension $n$ and \cite[Conjecture 3.4]{Cazz} conjectures that in this case we have
\[
U_W(t) =\mathbf{Exp}(-\frac{M_1}{\mathbb{L}^{\frac{1}{2}}-\mathbb{L}^{-\frac{1}{2}}} \frac{t}{1-t} - \frac{M_n}{\mathbb{L}^{\frac{1}{2}}-\mathbb{L}^{-\frac{1}{2}}} \frac{t^n}{1-t^n}) \]
with $M_1 = \mathbb{L}^{-\frac{3}{2}}([X_{DT}=1,\mu_3]-[X_{DT}=0])$ where $X_{DT}$ is the cubic in $\mathbb{A}^3$
\[
X_{DT} = (a+b)xyz + \frac{c}{3}(x^3+y^3+z^3) \]
and where $M_n = \mathbb{L}^{1/2}([\mathbb{P}^2]-[E_c])$ where $E_c$ is the plane elliptic curve $E_{pt}/ \langle \tau \rangle$ isogenous to $E_{pt}$ by dividing out the cyclic subgroup generated by $\tau$.

\vskip 2mm

In \cite{LBDT} we developed a method to verify such conjectures inductively by calculating the motives of certain Brauer-Severi schemes. In this paper we will compute the second term of $U_W(t)$ for the Sklyanin algebra $S_{1,1,c}$, that is when $\tau$ is a point of order two. By \cite[Conjecture 3.4]{Cazz} one would expect this coefficient to involve the motives of at least two different elliptic curves $[E_c]$ and $[E_{DT}]$ (which have different $j$-invariants). However, the computed term only involves the motif $[E_{DT}]$.

\vskip 4mm

\noindent
Acknowledgement : I like to thank Balazs Szendr\"oi for generous help in calculating the coefficient of the second term in the conjectured plethystic exponential, see section~\ref{conjecture}, and Brent Pym and Ben Davison for their continuing interest in this project.

\section{Simple representations of Sklyanin algebras} \label{simples}

The elliptic curve associated to the Sklyanin algebra $S_{a,b,c}$
\[
E_{pt}~:~(a^3+b^3+c^3)XYZ - abc (X^3+Y^3+Z^3) = 0 \]
is the locus of all {\em point modules} of $S_{a,b,c}$, that is, graded (critical) left-modules $A/(Al_1+Al_2)$ with the $l_i$ linear in $X,Y,Z$ (and hence $l_1,l_2$ determine a point in $\mathbb{P}^2$) such that its Hilbert series is $(1-t)^{-1}$. Addition by the point $p = [a:b:c] \in E_{pt}$ describes the automorphism on point modules given by the shift-by-$1$ functor. A {\em line module} of $S_{a,b,c}$ is a graded (critical) left-module $A/Al$ with $l$ linear and Hilbert series $(1-t)^{-2}$. As $S_{a,b,c}$ is a domain, line modules correspond to lines in $\mathbb{P}^2$.

We are particularly interested in elliptic Sklyanin algebras which are finite modules over their centers. S. P. Smith and J. Tate  \cite{SmithTate} proved that this is the case if and only if $\tau \in E_{pt}$ is a point of finite order $n$. In this case $S_{a,b,c}$ is a maximal order in a division algebra of dimension $n^2$ over its center and the center of $S_{a,b,c}$ is isomorphic to
\[
Z_{a,b,c} = \frac{\C[u_1,u_2,u_3,c_3]}{\Phi(u_1,u_2,u_3)-c_3^n} \]
where the $u_i$ are central elements of degree $n$, $c_3$ is a central element of degree $3$ and $\Phi$ is a homogeneous polynomial of degree $3$ in the $u_i$ describing the isogenous elliptic curve $E_c = E_{pt}/\langle \tau \rangle$. In \cite{Walton} and \cite{Kevin} it is shown that when $(n,3)=1$ all finite dimensional simple representations of $S_{a,b,c}$ (apart from the trivial $1$-dimensional simple) are of dimension $n$ and correspond to the smooth points of the central variety, which has an isolated singularity at the top. 

In principle, one can give an explicit description of the triple of $n \times n$ matrices describing the simple $n$-dimensional representation $M_q$ corresponding to the maximal  (non-graded) ideal $\mathfrak{m}_q$ of $Z_{a,b,c}$ using the isogeny $E_{pt} \rOnto E_c$, see \cite{LBsymbol} or \cite{Kevin}. If $c_3$ does not vanish in $q$, the ruling from the top-singularity through $q$ determines a point $\overline{q}$ in $\wis{Proj}(Z_{a,b,c}) = \mathbb{P}^2 = \mathbb{P}(u_1^*,u_2^*,u_3^*)$ not lying on the elliptic curve $E_c$. Write $\overline{q}$ as the intersection of two lines $L_1$ and $L_2$ in $\mathbb{P}^2$ and lift $L_1$ through the isogeny to a line $L$ in $\mathbb{P}^2=\mathbb{P}(X^*,Y^*,Z^*)$, then $\overline{q}$ determines the {\em fat point of multiplicity $n$}, that is, the graded (critical) left-module with Hilbert series $n/(1-t)$
\[
F_{\overline{q}} = \frac{A}{Al+Al_2} \]
where $l$ is the linear form in $X,Y,Z$ determining $L$ and $l_2$ the degree $n$ central element which is the linear form in $u_1,u_2,u_3$ determining $L_2$. The central localization of $S_{a,b,c}$ at $c_3$ has a central element $t$ of degree $1$ and the simple representation $M_q$ is then the quotient of $F_{\overline{q}}$ by $t-\lambda$ where $\lambda$ is the evaluation of $t$ in $q$. If $c_3$ is zero in $q$, the ruling determines a point $\overline{q} \in E_c$ which lifts through the isogeny to $n$ point modules which form of $\tau$-orbit. The coordinates of the corresponding $n$ points on $E_{pt}$ can then be used to give explicit $n \times n$ matrices of the corresponding simple representation $M_q$, see \cite[\S 3.1]{Kevin}.

Clearly, this approach is only as effective as we have explicit formulas for lifting through the isogeny $E_{pt} \rOnto E'$, that is for small $n$. Next, we give explicit matrices describing the simple representations in the case when $n=2$, that is when $a=b=1$, not using the isogeny but the fact that in this case the Sklyanin algebras $S_c = S_{1,1,-c}$ can be viewed as Clifford algebras of ternary symmetric bilinear forms and we can apply the theory of quadratic forms to describe its simple $2$-dimensional representations.

\vskip 4mm

In a recent paper \cite{ReichWalton} D.J. Reich and C. Walton describe a Maple algorithm to obtain explicit representations of $3$-dimensional Sklyanin algebras associated to a point of order two. Here we give a pen-and-paper approach, using  classical quadratic form theory.

Let $A=(a_{ij})_{i,j} \in M_3(\C)$ be a symmetric $3 \times 3$ matrix of rank $\geq 2$. The associated {\em Clifford algebra} $\wis{Cliff}_{\C}(A)$ is the $8$-dimensional $\C$-algebra generated by three elements $x_1,x_2$ and $x_3$ with defining relations
\[
x_i.x_j + x_j.x_i = a_{ij} \qquad \text{for all $1 \leq i,j \leq 3$} \]
The symmetric bilinear form on $V= \C x_1 + \C x_2 + \C x_3$ defined by $A$ coincides with $\langle v , w \rangle = Tr(v.w)$ for all $v,w \in V$, where the product is taken in the Clifford algebra. The structure of Clifford algebras is well-known, see for example \cite{Lam}.
\[
\wis{Cliff}_{\C}(A) \simeq \begin{cases} M_2(\C) \oplus M_2(\C)~& \text{if $rk(A)=3$} \\
M_2(\C) \otimes \C[\epsilon]~&\text{if $rk(A)=2$} \end{cases}
\]
That is, $\wis{Cliff}_{\C}(A)$ has two distinct simple $2$-dimensional representations $\psi_{\pm}$, which coincide when $det(A)=0$. We want to describe these explicitly, that is determine the $2 \times 2$ matrices $\psi_{\pm}(x_i)$. There is an invertible matrix $P \in GL_3(\C)$ such that
\[
P^{\tau}.A.P = \begin{bmatrix} 1 & 0 & 0 \\ 0 & 1 & 0 \\ 0 & 0 & 1 \end{bmatrix} = \langle 1,1,1 \rangle \quad \text{or} \quad \begin{bmatrix} 1 & 0 & 0 \\ 0 & 1 & 0 \\ 0 & 0 & 0 \end{bmatrix} = \langle 1,1,0 \rangle \]
The {\em Pauli matrices} describe the simple representations of $\wis{Cliff}_{\C}(\langle 1,1,\delta \rangle)$. If
\[
\sigma_1 = \begin{bmatrix} 0 & 1 \\ 1 & 0 \end{bmatrix}, \quad \sigma_2 = \begin{bmatrix} 0 & i \\ -i & 0 \end{bmatrix} \quad \text{and} \quad \sigma_3 = \begin{bmatrix} 1 & 0 \\ 0 & -1 \end{bmatrix} \]
then we have
\[
\psi_{\pm}(u_1) = \sigma_1, \quad \psi_{\pm}(u_2) = \sigma_2 \quad \text{and} \quad \psi_{\pm}(u_3) = \pm \delta \sigma_3 \]
for the new basis $(u_1,u_2,u_3)^{\tau} = P.(x_1,x_2,x_3)^{\tau}$ of $V$. But then, if $P^{-1}=(q_{ij})_{i,j}$ we have:

\begin{lemma} The simple $2$-dimensional representation(s) of $\wis{Cliff}_{\C}(A)$ are given by
\[
\psi_{\pm}(x_i) = \sum_{j=1}^3 q_{ji} \psi_{\pm}(u_j) = q_{i1} \sigma_1 + q_{i2} \sigma_2 + \pm q_{i3} \delta \sigma_3 \]
\end{lemma}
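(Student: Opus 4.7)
The plan is to transport the Pauli representations from the diagonalized form back through the change of basis. First, I would observe that since $P^{\tau} A P = \langle 1,1,\delta \rangle$ is diagonal, the new generators $u_j = \sum_k P_{jk} x_k$ satisfy $u_i u_j + u_j u_i = d_i \delta_{ij}$ with $(d_1,d_2,d_3) = (1,1,\delta)$. This is a one-line check from the defining relations $x_k x_l + x_l x_k = a_{kl}$ combined with the matrix identity for $P^{\tau} A P$. Consequently the substitution $u_j \mapsto \sum_k P_{jk} x_k$ extends to an isomorphism $\wis{Cliff}_{\C}(\langle 1,1,\delta\rangle) \simeq \wis{Cliff}_{\C}(A)$, so every representation of the former pulls back to one of the latter.

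Next I would verify that $\sigma_1,\sigma_2,\sigma_3$ obey $\sigma_i^2 = \mathbf{1}$ and $\sigma_i \sigma_j + \sigma_j \sigma_i = 0$ for $i \neq j$, and that $(\pm \delta \sigma_3)^2 = \delta^2 \mathbf{1} = \delta \mathbf{1}$ because $\delta \in \{0,1\}$. This is a routine computation with the explicit $2 \times 2$ matrices. Hence the assignments $\psi_{\pm}(u_1) = \sigma_1$, $\psi_{\pm}(u_2) = \sigma_2$, $\psi_{\pm}(u_3) = \pm \delta \sigma_3$ genuinely define algebra homomorphisms $\wis{Cliff}_{\C}(\langle 1,1,\delta \rangle) \to M_2(\C)$. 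These are surjective because $\mathbf{1},\sigma_1,\sigma_2,\sigma_3$ span $M_2(\C)$, and so yield simple $2$-dimensional representations, compatible with the Lam dichotomy recalled just before the lemma (two inequivalent summands when $\delta=1$, one when $\delta=0$).

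Finally I would invert the change of basis: writing $P^{-1} = (q_{ij})_{i,j}$, the relation $x = P^{-1} u$ gives $x_i = \sum_j q_{ij} u_j$, and applying $\psi_{\pm}$ with linearity yields the claimed formula $\psi_{\pm}(x_i) = q_{i1}\sigma_1 + q_{i2}\sigma_2 \pm q_{i3}\delta \sigma_3$. The only real obstacle is notational: one has to carefully track the transpose convention $(u_1,u_2,u_3)^{\tau} = P\cdot(x_1,x_2,x_3)^{\tau}$ to make sure the entries of $P^{-1}$ enter with the correct index placement; modulo this bookkeeping, the statement is a direct consequence of functoriality of $\wis{Cliff}_{\C}(-)$ under congruence of symmetric bilinear forms.
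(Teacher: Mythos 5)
Your argument is exactly the one the paper intends: the lemma is stated there without a separate proof, as an immediate consequence of the preceding discussion (congruence $P^{\tau}AP=\langle 1,1,\delta\rangle$, Pauli matrices representing the diagonal Clifford algebra, then inverting the base change via $P^{-1}=(q_{ij})$), and your write-up simply makes those steps explicit, including the transpose bookkeeping and the surjectivity/simplicity check. So the proposal is correct and takes essentially the same route as the paper.
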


The {\em $3$-dimensional quaternion Sklyanin algebra} $S_c=S_{1,1,-c}$ is the $\C$-algebra generated by three elements $X=x_1,Y=x_2,Z=x_3$ with defining quadratic relations
\[
XY+YX=cZ^2, \quad YZ+ZY=cX^2 \quad \text{and} \quad ZX+XZ=Y^2 \]
It follows that $u=X^2,v=Y^2$ and $Z^2=w$ are central elements and hence that $S_c$ is the Clifford algebra over $R=\C[u,v,w]$ as in \cite{Bass} associated with the ternary symmetric bilinear form on the free module $V= R x_1 \oplus R x_2 \oplus R x_3$ determined by the symmetric matrix in $M_3(R)$
\[
Q = \begin{bmatrix} 2 u & c w & c v \\
c w & 2 v & cu \\
cv & cu & 2 w \end{bmatrix} \]
Evaluating the entries of $Q$ in a point $p = (\alpha,\beta,\gamma) \in \mathbb{A}^3_{\C} = \wis{max}(R)$ we obtain a symmetric matrix $A=Q(p) \in M_3(\C)$ which is of rank at least two if and only if $p \not= (0,0,0)$. Lemma~1 gives us explicit representations of the two (or one) simple $2$-dimensional representations $\psi_{\pm}(p)$ of $S_c$ lying over the point $p$. 

It follows from \cite{LBVdB} or \cite{SmithTate} that the center $Z(S_c) = R \oplus R.Tr(x_1x_2x_3)$ where $Tr(x_1x_2x_3)^2 = D = det(Q)$. As a result $\wis{max}(Z(S_c))$ is a two-fold cover of $\mathbb{A}^3_{\C} = \wis{max}(R)$ ramified along the surface where $D$ vanishes. By the above, points of $\wis{max}(Z(S_c))$ (apart from the unique point lying over $0=(0,0,0)$) are in one-to-one correspondence with the isomorphism classes of $2$-dimensional simple representations of $S_c$.

We will now construct families of explicit representations as in \cite{ReichWalton}. The idea is to diagonalize $Q$ over $\mathbb{A}^3-\{ 0 \}$ and to keep track of the base-change matrix $P \in M_3(\C[u,v,w])$. For this we apply the classical diagonalization algorithm which in this case involves the choice of just two pivots.

As $p \not= (0,0,0)$ we may assume (after permuting the variables $x_i$ if necessary) that $2u \not= 0$ which will be our first pivot. One starts off with the $3 \times 6$ matrix $(Q | I_3)$ and uses the pivot to obtain zeroes in positions $2,3$ of the first column and positions $2,3$ in the first row by the usual trick of adding suitable multiples of rows and columns. The row-operations also have an effect on the right-hand side $3 \times 3$ matrix. After this step one obtains the matrix
\[
\begin{bmatrix} 2 u & 0 & 0 & 1 & 0 & 0 \\
0 & 2 u (4 uv -c^2 w^2) & 2u (2 c u^2 - c^2 vw) & - c w & 2 u &  0 \\
0 & 2u (2c u^2-c^2 v w) & 2u (4 u w - c^2 v^2) & - c v & 0 & 2u \end{bmatrix} \]

{\bf Case 1 :} If $A=4 uv - c^2w^2 \not= 0$ (or, after permuting the variables, $4uw-c^2 v^2 \not= 0$) use this as pivot. After this step one obtains the diagonal matrix $\Delta$ and the base-change matrix $P$
\[
(\Delta | P^{\tau}) = \begin{bmatrix}
2u & 0 & 0 & 1 & 0 & 0 \\
0 & 2u A & 0 & -c w & 2u & 0 \\
0 & 0 & 4 u^2 A D & 2 c u B & 2 c u C & 2 u A \end{bmatrix} \]
where $B = c u w - 2 v^2$ and $C = c v w - 2 u^2$. Clearly, $P$ is invertible on the open set where $u A \not= 0$.

{\bf Case 2 :} If $4 u v - c^2 w^2 = 0 = 4 uw -c^2 v^2$, we have $2 cu^2 -c^2 vw \not= 0$. In this case we add the third row to the second and the third column to the second, use the resulting $(2,2)$-entry as pivot in order to arrive at
\[
(\Delta | P^{\tau}) = \begin{bmatrix}
2 u & 0 & 0 & 1 & 0 & 0 \\
0 & -2 u L & 0 & -c v - c w & 2 u & 2 u \\
0 & 0 & -16 u^4 L D & 4 c u^2 Q_0 & 4 u^2 Q_1 & -4 u^2 Q_2
\end{bmatrix}
\]
where
\[
\begin{cases}
Q_0 &= (w-v)(2w + 2v + cu) \\
Q_1 &= c^2 vw - 4 uw +c^2v^2 - 2c u^2) \\
Q_2 &= c^2 w^2+c^2 v w - 4 u v - 2 c u^2)
\end{cases}
\]
and $L = Q_1 + Q_2$. The determinant of the basechange matrix is $-8 u^3 L$. In a point where $4 uv - c^2 w^2=0=4 u w - c^2 v^2$, $L$ is equal to $-2(2cu^2-c^2 vw)$ so $P$ is invertible in those points. Observe that these two cases cover all points in $\wis{max}(Z(S_c))$ where $u \not= 0$.

\begin{lemma} With notations as above, let $\Delta = diag(D_1,D_2,D_3)$ and $P^{-1} = (Q_{ij})_{i,j}$. Then, the maps (remember that $x_1=X,x_2=Y$ and $x_3=Z$)
\[
\psi_{\pm}(x_i) = Q_{i1} \sqrt{D_1} \sigma_1 + Q_{i2} \sqrt{D_2} \sigma_2 \pm \sqrt{D_3} \sigma_3 \]
give a family of explicit representations of $S_c$, with a unique representative for all simple $2$-dimensional representations on the open set of $\wis{max}(Z(S_c))$ where $u \not= 0$. Here we take the matrices of the first case if $uA \not= 0$ and those of the second case on the locus where $4 u v - c^2 w^2 = 0 = 4 uw -c^2 v^2$. Permuting the variables covers the entire Azumaya-locus of $S_c$ which is $\wis{max}(Z(S_c))$ with the unique isolated singularity lying over $(0,0,0)$ removed.
\end{lemma}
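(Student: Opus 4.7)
The plan is to transport Lemma~1 pointwise along the explicit diagonalizations of $Q$ computed above. For any closed point $\mathfrak{m} \in \wis{max}(Z(S_c))$ projecting to $p = (\alpha,\beta,\gamma) \in \mathbb{A}^3_{\C} - \{0\}$ under the central inclusion $R \hookrightarrow Z(S_c)$, the specialization $S_c \otimes_R \kappa(p)$ is the Clifford algebra $\wis{Cliff}_{\C}(Q(p))$, and $Q(p)$ has rank $\geq 2$. Hence Lemma~1 furnishes the one or two simple $2$-dimensional representations, and the choice of sign $\pm$ matches the two sheets of the cover $\wis{max}(Z(S_c)) \rTo \mathbb{A}^3_{\C}$, which coincide over the ramification locus $D=\det Q = 0$, that is precisely where $D_3 = 0$.

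In Case~1 I would set $D_1 = 2u$, $D_2 = 2uA$, $D_3 = 4u^2 A D$ and rescale the intermediate basis $u_j$ by $\sqrt{D_j}^{-1}$ to bring $Q$ into the canonical shape $\langle 1,1,\delta\rangle$ with $\delta \in \{0,1\}$, so that the Pauli assignment reads $\psi_{\pm}(u_j) = \sqrt{D_j}\,\sigma_j$ for $j=1,2$ and $\psi_{\pm}(u_3) = \pm\sqrt{D_3}\,\sigma_3$. Expanding $x_i = \sum_j Q_{ij}\, u_j$ via $P^{-1} = (Q_{ij})$ then yields the formulas in the statement. Case~2 is entirely parallel once one invokes the observation already made in the text: on the closed subset $4uv - c^2 w^2 = 0 = 4uw - c^2 v^2$ the polynomial $L$ reduces to $-2(2cu^2 - c^2 vw)$, which cannot vanish where $u \neq 0$, so the determinant $\det P = -8u^3 L$ is invertible and the diagonalization is honest. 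In both cases the fact that the constructed $\psi_{\pm}(x_i)$ satisfy the three defining quadratic relations of $S_c$ is automatic from the Clifford relations $\sigma_i\sigma_j + \sigma_j\sigma_i = 2\delta_{ij}$, since we have by construction $\psi_{\pm}(x_i)\psi_{\pm}(x_j) + \psi_{\pm}(x_j)\psi_{\pm}(x_i) = Q_{ij}(p)\cdot I_2$.

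The covering statement is then combinatorial. Case~1 handles the open set $\{uA \neq 0\}$ and Case~2 handles its complement in $\{u \neq 0\}$; cyclically permuting $(X,Y,Z)$ (and correspondingly $(u,v,w)$), using the $S_3$-symmetry of the relations of $S_c$ and of $Q$, yields two more charts covering $\{v \neq 0\}$ and $\{w \neq 0\}$ respectively. Their union is $\mathbb{A}^3_{\C} - \{0\}$, whose preimage under the two-fold cover $\wis{max}(Z(S_c)) \rTo \mathbb{A}^3_{\C}$ is precisely $\wis{max}(Z(S_c))$ minus the unique closed point lying above the origin. By the Smith--Tate result recalled at the start of the section this is exactly the Azumaya locus, and the correspondence between points of this locus and isomorphism classes of $2$-dimensional simples (via annihilators in the center) guarantees that each simple is represented exactly once by our formulas.

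The step I expect to be the most delicate is the bookkeeping of the branch of $\sqrt{D_j}$: the sign ambiguity in each square root interacts with the $\pm$ in $\psi_{\pm}(u_3)$, so a sloppy convention could silently interchange the two simples above a non-ramified point. One has to fix a branch of $\sqrt{D_1}$ and $\sqrt{D_2}$ on each chart (they enter only through their squares in the Clifford relation check, so their signs are inessential once $\psi_{\pm}$ are taken together as an unordered pair), and keep the sign of $\sqrt{D_3}$ as the genuine parameter distinguishing $\psi_{+}$ from $\psi_{-}$. With this convention the assignment $\mathfrak{m} \mapsto \psi_{\epsilon}$, $\epsilon = \pm 1$, is well-defined on each chart up to the deck transformation of the cover, which is exactly what the lemma claims.
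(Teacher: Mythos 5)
Your argument is exactly the one the paper intends: the lemma is stated as an immediate consequence of Lemma~1 applied pointwise to the explicit congruence diagonalizations of $Q$ in Cases 1 and 2, together with the invertibility of $P$ on the relevant loci (via $\det P=-8u^3L$ and $L=-2(2cu^2-c^2vw)$ there), the two-fold-cover/Azumaya description of $\wis{max}(Z(S_c))$ identifying points with isomorphism classes of $2$-dimensional simples, and the permutation of variables to cover $\mathbb{A}^3-\{0\}$. Your write-up fills in the same steps (including the sign/branch bookkeeping for $\sqrt{D_3}$) and is correct, so it matches the paper's approach.
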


For example, on the open set where $u A \not= 0$ we have the following explicit matrix-representations:
\[
\begin{cases}
\psi_{\pm}(X) &= \begin{bmatrix} 0 & \sqrt{2u} \\ \sqrt{2u} & 0 \end{bmatrix} \\
& \\
\psi_{\pm}(Y) &= \frac{c w}{2u} \begin{bmatrix} 0 & \sqrt{2u} \\ \sqrt{2u} & 0 \end{bmatrix}  + \frac{1}{2u} \begin{bmatrix} 0 & -i \sqrt{2u A} \\ i \sqrt{2 u A} & 0 \end{bmatrix} \\
& \\
\psi_{\pm}(Z) &= \frac{c v}{2u} \begin{bmatrix} 0 & \sqrt{2u} \\ \sqrt{2u} & 0 \end{bmatrix} - \frac{cC}{2 u A} \begin{bmatrix} 0 & -i \sqrt{2u A} \\ i \sqrt{2 u A} & 0 \end{bmatrix} \mp \frac{1}{2 u A} \begin{bmatrix}  2u \sqrt{AD} & 0 \\ 0 & - 2u \sqrt{AD} \end{bmatrix} 
\end{cases} \]

\section{Superpotentials and motives} \label{conjecture}

Consider the cubic superpotential $W=aXYZ+bXZY+\tfrac{c}{3}(X^3+Y^3+Z^3)$ in the noncommutative variables $X,Y$ and $Z$. For every dimension $n \geq 1$, the superpotential $W$ determines the Chern-Simons functional
\[
Tr(W)~:~M_n(\C) \oplus M_n(\C) \oplus M_n(\C) \rTo \C \]
obtained by replacing $X,Y$ and $Z$ by the first, second resp. third component matrix and taking the trace. The representation theoretic interest of the degeneracy locus $\{~d Tr(W)=0~\}$ of this functional is that it coincides with the scheme of $n$-dimensional representations $\wis{rep}_n(R_W)$ of the associated Jacobi algebra
\[
R_W = \frac{\C \langle X,Y,Z \rangle}{(\partial_X(W),\partial_Y(W),\partial_Z(W))} \]
where the $\partial_V$ are the cyclic derivative with respect to the variables $V$, which in the case of the above superpotential $W$
gives us the defining equations of $S_{a,b,c}$. That is, the degeneracy locus of the superpotential $W$
\[
\{~d Tr(W) = 0~\} = \wis{rep}_n(S_{a,b,c}) \]
By the Denef-Loeser theory of motivic nearby cycles, see \cite{Denef}, the motive of this degeneracy locus can often be computed as the difference of the motives of the general fiber and the zero-fiber of the functional. For this reason we are interested in the (naive, equivariant) motive of the $\lambda$-fiber of the functional $Tr(W)$ which we denote by $\mathbb{M}^W_n(\lambda) = Tr(W)^{-1}(\lambda)$.

Recall that to each isomorphism class of a complex variety $X$ (equipped with a good action of a finite group of roots of unity) we associate its naive equivariant  motive $[X]$ which is an element in the ring $K_0^{\hat{\mu}}(\mathrm{Var}_{\C})[ \mathbb{L}^{-1/2}]$ (see \cite{Davison} or \cite{Cazz}) and is subject to the scissor- and product-relations
\[
[X]-[Z]=[X-Z] \quad \text{and} \quad [X].[Y]=[X \times Y] \]
whenever $Z$ is a Zariski closed subvariety of $X$. A special element is the Lefschetz motive $\mathbb{L}=[ \mathbb{A}^1_{\C}, id]$ and we recall from \cite[Lemma 4.1]{Morrison} that $[GL_n]=\prod_{k=0}^{n-1}(\mathbb{L}^n-\mathbb{L}^k)$ and from \cite[2.2]{Cazz} that $[\mathbb{A}^n,\mu_k]=\mathbb{L}^n$ for a linear action of $\mu_k$ on $\mathbb{A}^n$. This ring is equipped with a plethystic exponential $\wis{Exp}$, see for example \cite{Bryan} and \cite{Davison}.

As $W$ is homogeneous it follows from \cite[Thm. 1.3]{Davison}  that the virtual motive of the degeneracy locus is equal to
\[
[ dTr(W)=0 ]_{virt} = [ \wis{rep}_n(S_{a,b,c}) ]_{virt} = \mathbb{L}^{-\frac{2n^2}{2}}([\mathbb{M}_{n}^W(0)]-[\mathbb{M}_{n}^W(1)]) \]
where $\hat{\mu}$ acts via $\mu_d$ on $\mathbb{M}^W_{n}(1)$ and trivially on $\mathbb{M}^W_{n}(0)$.
These virtual motives can be packaged together into the motivic Donaldson-Thomas series
\[
U_W(t) = \sum_{n=0}^{\infty} \mathbb{L}^{- \frac{2n^2}{2}} \frac{[\mathbb{M}_{n}^W(0)]-[\mathbb{M}_{n}^W(1)]}{[GL_n]} t^n \]
By the Jordan-H\"older theorem, the sequence $ \{~[ \wis{rep}_n(S_{a,b,c}) ]_{virt}~\}$ is expected to jump at every dimension $n$ where $S_{a,b,c}$ has simple $n$-dimensional representations. For this reason A. Cazzaniga, A. Morrison, B. Pym and B. Szendr\"oi conjecture in \cite{Cazz} that the generating sequence $U_W(t)$ has an exponential expression involving rational functions of virtual motives connected to the simple representations of the Jacobi algebra $S_{a,b,c}$. Explicitly, their conjecture \cite[Conjecture 3.4]{Cazz} asserts that in case $\tau \in E_{pt}$ has infinite order that then
\[
U_W(t) = \wis{Exp}(- \frac{M_1}{\mathbb{L}^{\tfrac{1}{2}} - \mathbb{L}^{- \tfrac{1}{2}}}\frac{t}{1-t}) \]
where $M_1= \mathbb{L}^{-3/2}([X_{DT} = 1] - [X_{DT} = 0])$ where $X_{DT}$ is the cubic function in the three commuting variables $x,y,z$
\[
X_{DT} = (a+b)xyz + \frac{c}{3}(x^3+y^3+z^3) \]
which gives $Tr(W)$ for $n=1$. Note that $X_{DT}$ determines an elliptic curve in $\PP^2$, usually with a different $j$-invariant than $E_{pt}$ and $E_c$. If however $\tau \in E_{pt}$ is a point of finite order $n$ and $(n,3)=1$ one expects another term in the exponential expression coming from the simples in dimension $n$. In \cite[Conjecture 3.4]{Cazz} it is conjectured that in this case
\[
U_W(t) = \wis{Exp}(- \frac{M_1}{\mathbb{L}^{\tfrac{1}{2}} - \mathbb{L}^{- \tfrac{1}{2}}}\frac{t}{1-t} - \frac{M_n}{\mathbb{L}^{\tfrac{1}{2}}-\mathbb{L}^{-\tfrac{1}{2}}}\frac{t^n}{1-t^n}) \]
where $M_n=\mathbb{L}^{1/2}([\mathbb{P}^2] - [E_c])$. Observe already from section~\ref{simples} that this term only encodes the simple $n$-dimensional representations determined by points $q \in \wis{Spec}(Z_{a,b,c})$ not lying on the cone over $E_c$.

\begin{lemma} If we denote with
\[
N_1 = (\mathbb{L}-1)[E_{DT}]+1-[S_{DT},\mu_3] \quad \text{and} \quad N_2=[E_c]-[\mathbb{P}^2] \]
then the coefficient of $t^2$ in the conjectured series $U_W(t)$ is equal to
\[
\frac{\mathbb{L}(\mathbb{L}^2-1) N_2 + \mathbb{L}^{-2} N_1^2 + \mathbb{L}^{-1}(\mathbb{L}^2-1) N_1 + \mathbb{L}^{-2} (\mathbb{L}-1) \sigma_2(N_1)}{(\mathbb{L}^2-1)(\mathbb{L}-1)} \]
\end{lemma}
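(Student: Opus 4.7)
The plan is direct computation: substitute the geometric identifications, expand $\wis{Exp}$ through order $t^2$, and simplify. Start by observing that $\{X_{DT}=0\}\subset\mathbb{A}^3$ is the affine cone over the plane cubic $E_{DT}\subset\mathbb{P}^2$, so $[X_{DT}=0]=(\mathbb{L}-1)[E_{DT}]+1$ and hence
\[
M_1 \;=\; -\mathbb{L}^{-3/2}N_1, \qquad M_2 \;=\; -\mathbb{L}^{1/2}N_2.
\]
Using $\mathbb{L}^{1/2}-\mathbb{L}^{-1/2}=\mathbb{L}^{-1/2}(\mathbb{L}-1)$, the exponent appearing in the conjecture simplifies to
\[
f(t) \;=\; \frac{\mathbb{L}^{-1}N_1}{\mathbb{L}-1}\cdot\frac{t}{1-t} \;+\; \frac{\mathbb{L} N_2}{\mathbb{L}-1}\cdot\frac{t^2}{1-t^2},
\]
whose first two Taylor coefficients are $a_1=\mathbb{L}^{-1}N_1/(\mathbb{L}-1)$ and $a_2=(\mathbb{L}^{-1}N_1+\mathbb{L} N_2)/(\mathbb{L}-1)$.

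Next I apply the motivic plethystic exponential through order $t^2$. Writing $\wis{Exp}(f)=\exp\bigl(\sum_{k\geq 1}k^{-1}\sigma_k(f)(t^k)\bigr)$ and collecting the three contributions of weight $t^2$ (namely $(k,n)=(1,2)$, $(k,n)=(2,1)$, and the square of the $(1,1)$-term), while using the motivic power structure on $K_0^{\hat\mu}(\wis{Var}_{\C})[\mathbb{L}^{\pm 1/2}]$ of \cite{Davison} and \cite{Cazz} for the symmetric square (in which the quantum factorial $[2]!_\mathbb{L}=\mathbb{L}+1$ replaces the classical $2!$), one obtains
\[
[U_W(t)]_{t^2} \;=\; a_2 \;+\; \frac{a_1^{\,2}}{\mathbb{L}+1} \;+\; \sigma_2(a_1).
\]
Because the Adams-type operation satisfies $\sigma_2(\mathbb{L})=\mathbb{L}^2$, one has $\sigma_2(a_1)=\mathbb{L}^{-2}\sigma_2(N_1)/(\mathbb{L}^2-1)$.

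Placing all three summands over the common denominator $(\mathbb{L}^2-1)(\mathbb{L}-1)$ now produces the four pieces of the claimed numerator: $\mathbb{L}(\mathbb{L}^2-1)N_2$ and $\mathbb{L}^{-1}(\mathbb{L}^2-1)N_1$ from $a_2$, the term $\mathbb{L}^{-2}N_1^{\,2}$ from $a_1^{\,2}/(\mathbb{L}+1)$, and the term $\mathbb{L}^{-2}(\mathbb{L}-1)\sigma_2(N_1)$ from $\sigma_2(a_1)$. The main technical obstacle is invoking the correct quantum/motivic symmetric square: in the $\hat\mu$-equivariant setting one must divide by $[2]!_\mathbb{L}=\mathbb{L}+1$ rather than by $2$, and one must verify that $\sigma_2$ acts coherently on the $\mu_3$-equivariant summand $[S_{DT},\mu_3]\subset N_1$. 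Once this power-structure bookkeeping is fixed, the remainder is elementary manipulation of rational functions in $\mathbb{L}$.
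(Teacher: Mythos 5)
Your identifications $M_1=-\mathbb{L}^{-3/2}N_1$ and $M_2=-\mathbb{L}^{1/2}N_2$, the exponent $f(t)$, and the final expression all agree with the paper, and your overall route (expand $\wis{Exp}$ through order $t^2$ and collect three contributions) is essentially the paper's. The weak point is the symmetric-square bookkeeping, which is asserted rather than proved. For the motivic exponential the $t^2$-coefficient is simply $a_2+\sigma_2(a_1)$, where $\sigma_2$ is the genuine second symmetric power applied to the whole class $a_1=\mathbb{L}^{-1}N_1/(\mathbb{L}-1)$; there is no general rule adding an extra term $a_1^2/[2]!_{\mathbb{L}}$, and $\sigma_2$ is not multiplicative, so the stated equality $\sigma_2(a_1)=\mathbb{L}^{-2}\sigma_2(N_1)/(\mathbb{L}^2-1)$ is false for the actual operation (it drops the cross terms). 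What makes your total nevertheless correct is that $a_1$ is a geometric series in $\mathbb{L}^{-1}$: writing $a_1=\sum_{j\geq 2}\mathbb{L}^{-j}N_1$ and using $\sigma_2(x+y)=\sigma_2(x)+xy+\sigma_2(y)$ together with $\sigma_2(\mathbb{L}^{-j}N_1)=\mathbb{L}^{-2j}\sigma_2(N_1)$ gives
\[
\sigma_2(a_1)=\sum_{j\geq 2}\mathbb{L}^{-2j}\sigma_2(N_1)+\sum_{2\leq j_1<j_2}\mathbb{L}^{-j_1-j_2}N_1^2=\frac{\mathbb{L}^{-2}\sigma_2(N_1)}{\mathbb{L}^2-1}+\frac{\mathbb{L}^{-2}N_1^2}{(\mathbb{L}^2-1)(\mathbb{L}-1)},
\]
and the off-diagonal sum coincides with your $a_1^2/(\mathbb{L}+1)$, since $(\mathbb{L}-1)^2(\mathbb{L}+1)=(\mathbb{L}-1)(\mathbb{L}^2-1)$. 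This verification is exactly the content of the paper's proof (its three cases: $k=2,n=1$; $k=1,n=1$ in two factors $j_1<j_2$; $k=1,n=2$ in one factor), so your ``quantum factorial'' rule is the thing to be proved here, not something to be invoked; it is valid only for classes of this geometric-series form. A further small slip: $\wis{Exp}(f)$ is $\exp$ of the Adams (power) operations applied to $f$, not of $\sigma_k(f)$. Once the geometric-series expansion is inserted, your argument and the paper's are the same, and the remaining steps ($a_2$ giving $\mathbb{L}(\mathbb{L}^2-1)N_2+\mathbb{L}^{-1}(\mathbb{L}^2-1)N_1$ over the common denominator, etc.) are fine.
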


\begin{proof}
With these notations, the conjecture \cite[Conjecture 3.4]{Cazz} can be rewritten as
\[
U_W(t) = \wis{Exp}(\frac{\mathbb{L}(\mathbb{L}^{-2} N_1)}{\mathbb{L}-1}\frac{t}{1-t}).\wis{Exp}(\frac{\mathbb{L}N_2}{\mathbb{L}-1}\frac{t^2}{1-t^2}) \]
The second term is equal to
\[
\wis{Exp}(\sum_{k\geq 1} \sum_{j \geq 0} \mathbb{L}^{-j} N_2 t^{2k}) = \prod_{k \geq 1}\prod_{j \geq 0} \wis{Exp}(\mathbb{L}^{-j} N_2 t^{2k}) = \]
\[
\prod_{k \geq 1} \prod_{j \geq 0}(\sum_{n \geq 0} \sigma_n(\mathbb{L}^{-j} N_2 t^{2k})) = \prod_{k \geq 1} \prod_{j \geq 0} (\sum_{n \geq 0} \mathbb{L}^{-nj} \sigma_n(N_2) t^{2kn}) \]
As we are only interested in the coefficient of $t^2$ we need only consider the term in the first product where $k=1$ and then get
\[
(1 + N_2 t^2 + \hdots)(1 + \mathbb{L}^{-1} N_2 t^2+ \hdots)(1 + \mathbb{L}^{-2} N_2 t^2 + \hdots) \hdots = 1 +  \frac{N_2}{1- \mathbb{L}^{-1}} t^2 + \hdots \]
For the first term, we get likewise
\[
\wis{Exp}(\sum_{k \geq 1} \sum_{j \geq 2} \mathbb{L}^{-j} N_1 t^k) = \prod_{k \geq 1} \prod_{j \geq 2} \wis{Exp}(\mathbb{L}^{-j} N_1 t^k) = \]
\[
\prod_{k \geq 1} \prod_{j \geq 2} (\sum_{n \geq 0} \sigma_n(\mathbb{L}^{-j} N_1 t^k)) = \prod_{k \geq 1} \prod_{j \geq 2} (\sum_{n \geq 0} \mathbb{L}^{-nj} \sigma_n(N_1) t^{kn}) \]
As we only want the coefficient of $t^2$ we have to consider three contributions:

$k=1,n=1$ in two brackets with $j_2 > j_1 \geq 2$ this gives
\[
\sum_{2 \leq j_1 < j_2} N_1^2 \mathbb{L}^{-(j_1+j_2)} = \sum_{j \geq 2} \sum_{k \geq 0} \mathbb{L}^{-2j-k-1} N_1^2 = \]
\[
 \mathbb{L}^{-5} N_1^2 (\sum_{j \geq 0} \mathbb{L}^{-2j})(\sum_{k \geq 0} \mathbb{L}^{-k})
= \frac{\mathbb{L}^{-5} N_1^2}{(1 - \mathbb{L}^{-2})(1 - \mathbb{L}^{-1})} = \frac{\mathbb{L}^{-2} N_1^2}{(\mathbb{L}^2-1)(\mathbb{L}-1)} \]

$k=2, n=1$ in one bracket and $n=0$ in all others. This gives
\[
\sum_{j \geq 2} \mathbb{L}^{-j} N_1 = \frac{\mathbb{L}^{-2} N_1}{1 - \mathbb{L}^{-1}} = \frac{\mathbb{L}^{-1} N_1}{\mathbb{L}-1} \]

$k=1, n=2$ in one bracket and $n=0$ in all others. Then we get
\[
\sum_{j \geq 2} \mathbb{L}^{-2j} \sigma_2(N_1) = \frac{\mathbb{L}^{-4} \sigma_2(N_1)}{1 - \mathbb{L}^{-2}} = \frac{\mathbb{L}^{-2} \sigma_2(N_1)}{\mathbb{L}^2 -1} \]
Summing up all terms gives the claimed expression.
\end{proof}

\section{Brauer-Severi motives}

In \cite{LBDT} an inductive method was proposed to compute the coefficients of the series $U_W(t)$ inductively. For every $n \geq 1$ and every $\lambda \in \C$ introduce the following quotient of the trace ring $\mathbb{T}_{3,n}$ of $3$ generic $n \times n$ matrices
\[
\mathbb{T}^W_n(\lambda) = \frac{\mathbb{T}_{3,n}}{(Tr(W)-\lambda)} \]
The reason being that the $\lambda$-fiber $Tr(W)^{-1}(\lambda)$ is the scheme of $n$-dimensional trace preserving representations of $\mathbb{T}^W_n(\lambda)$
\[
Tr(W)^{-1}(\lambda) = \wis{trep}_n(\mathbb{T}^W_n(\lambda)) \]
Now, consider the associated Brauer-Severi scheme in the sense of M. Van den Bergh \cite{VdBBS}. That is, consider the open subscheme $U^W_n$ of $\wis{trep}_n(\mathbb{T}^W_n(\lambda)) \times \C^n$ consisting of couples
\[
U^W_n(\lambda) = \{ (\phi,v) \in \wis{trep}_n(\mathbb{T}^W_n(\lambda)) \times \C^n~|~\phi(\mathbb{T}^W_n(\lambda)).v = \C^n \} \]
on which $GL_n$ acts freely and let the Brauer-Severi scheme be the corresponding quotient variety $\wis{BS}^W_n(\lambda) = U^W_n(\lambda)/GL_n$. Then it is shown in \cite[Prop. 5]{LBDT} that one can compute the fiber-motives at $n$ from knowledge of the Brauer-Severi-motives for all dimensions $k \leq n$ and the fiber-motives at all $k < n$. Explicitly, 
\[
(\mathbb{L}^n-1) \frac{[\mathbb{M}^W_n(0)]-[\mathbb{M}^W_n(1)]}{[GL_n]} \]
is equal to
\[
 ([\mathbf{BS}^W_n(0)]-[\mathbf{BS}_n^W(1)])+\sum_{k=1}^{n-1}\frac{\mathbb{L}^{2k(n-k)}}{[GL_{n-k}]}(\mathbf{BS}^W_k(0)]-[\mathbf{BS}^W_k(1)])([\mathbb{M}^W_k(0)]-[\mathbb{M}^W_k(1)]) \]

We will next compute the first two terms in $U_W(t)$ and for $n=2$ the previous formula reduces to
\[
(\mathbb{L}^2-1)\frac{[\mathbb{M}^W_2(0)]-[\mathbb{M}^W_2(1)]}{[GL_2]} = [\mathbf{BS}^W_2(0)]-[\mathbf{BS}^W_2(1)]+ \frac{\mathbb{L}^2}{(\mathbb{L}-1)}([\mathbb{M}^W_1(0)]-[\mathbb{M}^W_1(1)])^2 
\]
and we have already that
\[
[ \mathbb{M}^W_1(1)]=[X_{DT}=1] \quad \text{and} \quad [\mathbb{M}^W_1(0)] = [X_{DT}=0] = (\mathbb{L}-1)[E_{DT}]+1 \]
so it remains to compute the difference of the Brauer-Severi motives $[\mathbf{BS}^W_2(0)]-[\mathbf{BS}^W_2(1)]$. 

From \cite{ReinekeBS} we deduce that $\wis{BS}_2(\mathbb{T}_{3,2})$ has a cellular decomposition as $\mathbb{A}^{10} \sqcup \mathbb{A}^8 \sqcup \mathbb{A}^8$ where the three cells have representatives
\[
\begin{cases}
\wis{cell}_1~:~v = \begin{bmatrix} 1 \\ 0 \end{bmatrix}, \quad X = \begin{bmatrix} 0 & p \\ 1 & r \end{bmatrix}, \quad 
Y = \begin{bmatrix} s & t \\ u & v \end{bmatrix}, \quad
Z = \begin{bmatrix} w & x \\ y & z \end{bmatrix}  \\
\\
\wis{cell}_2~:~v = \begin{bmatrix} 1 \\ 0 \end{bmatrix}, \quad X = \begin{bmatrix} n & p \\ 0 & r \end{bmatrix}, \quad 
Y = \begin{bmatrix} 0 & t \\ 1 & v \end{bmatrix}, \quad
Z = \begin{bmatrix} w & x \\ y & z \end{bmatrix} \\
\\
\wis{cell}_3~:~v = \begin{bmatrix} 1 \\ 0 \end{bmatrix}, \quad X = \begin{bmatrix} n & p \\ 0 & r \end{bmatrix}, \quad 
Y = \begin{bmatrix} s & t \\ 0 & v \end{bmatrix}, \quad
Z = \begin{bmatrix} 0 & x \\ 1 & z \end{bmatrix} 
\end{cases}
\]
It follows that $\wis{BS}_{3,2}^W(\lambda)$ decomposes as $\mathbf{S_1}(\lambda) \sqcup \mathbf{S_2}(\lambda) \sqcup \mathbf{S_3}(\lambda)$ where the subschemes $\mathbf{S_i}(\lambda)$ of $\mathbb{A}^{11-i}$ have defining equations
\[
\begin{cases}
\mathbf{S_1}(\lambda)~:~(C + Q_u.u + Q_y.y + Q_q)|_{n=0} = \lambda \\
\mathbf{S_2}(\lambda)~:~(C + Q_y.y + Q_u)|_{s=0} = \lambda \\
\mathbf{S_3}(\lambda)~:~(C + Q_y)|_{w=0} = \lambda
\end{cases}
\]
where
\[
\begin{cases}
C &=\frac{c}{3}(n^3+r^3+s^3+v^3+w^3+z^3)+(a+b)(rvz+nsw) \\
Q_q &= a(tz+sx)+b(vx+tw)+c p(r+n) \\
Q_u &= a(rx+pw)+b(pz+nx) + c t(v+s) \\
Q_y &= a(pv+nt) + b (rt+ps) + c x(z+w)
\end{cases}
\]
Note that in using the cellular decomposition, we set a variable equal to $1$. So, in order to retain a homogeneous form we let $\mathbb{G}_m$ act on $n,s,w,r,v,z$ with weight one, on $q,u,y$ with weight two and on $x,t,p$ with weight zero. Thus, we need a slight extension of \cite[Thm. 1.3]{Davison} as to allow $\mathbb{G}_m$ to act with weight two on certain variables.

\vskip 4mm

We will restrict to the case of a Sklyanin algebra with a point of order two, that is the case when $a=b$, which we may assume to be equal to $1$, and with $c \not= 0$.

\begin{lemma} With $a=b=1$ and $c \not= 0$ we have 
\[
\begin{cases}
[\mathbf{S_3}(0)] = \mathbb{L}^7+\mathbb{L}^5-\mathbb{L}^4 \\
[\mathbf{S_3}(1)] = \mathbb{L}^7 - \mathbb{L}^4
\end{cases}
\]
and therefore $[\mathbf{S_3}(0)]-[\mathbf{S_3}(1)] = \mathbb{L}^5$.
\end{lemma}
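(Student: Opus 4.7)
The plan is to exploit the fact that, after specializing $a=b=1$ and restricting to $w=0$, the defining equation of $\mathbf{S_3}(\lambda)$ becomes affine-linear in three of the eight coordinates, so its motive can be read off by stratifying the projection onto the remaining five. First, I would expand $C + Q_y$ under $a=b=1$ and $w=0$ and rewrite the defining equation of $\mathbf{S_3}(\lambda) \subset \mathbb{A}^8_{n,r,s,v,z,p,t,x}$ in the form
\[
p(v+s) + t(n+r) + x(cz) + G(n,r,s,v,z) = \lambda, \qquad G := \tfrac{c}{3}(n^3+r^3+s^3+v^3+z^3) + 2rvz.
\]
Read as an affine relation in $(p,t,x)$ parameterized by $(n,r,s,v,z)$, this sets up the projection $\pi \colon \mathbf{S_3}(\lambda) \to \mathbb{A}^5_{n,r,s,v,z}$.

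Next, I would stratify the base by whether the coefficient vector $(v+s,\,n+r,\,cz)$ vanishes. Using $c\neq 0$, this happens exactly on the linear $2$-plane $\mathbf{V}=\{v=-s,\ r=-n,\ z=0\}$, parameterized by $(n,s)$, so $[\mathbf{V}]=\mathbb{L}^2$ and $[\mathbb{A}^5\setminus\mathbf{V}]=\mathbb{L}^5-\mathbb{L}^2$. On the complement the relation is a genuine nonzero affine equation in $(p,t,x)$; decomposing $\mathbb{A}^5\setminus\mathbf{V}$ as
\[
\{v+s\neq 0\} \;\sqcup\; \{v+s=0,\ n+r\neq 0\} \;\sqcup\; \{v+s=n+r=0,\ z\neq 0\},
\]
one can solve for $p$, $t$, or $x$ on the respective stratum, exhibiting $\pi$ as a Zariski-trivial $\mathbb{A}^2$-bundle on each piece. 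By scissor and product relations this locus contributes $(\mathbb{L}^5-\mathbb{L}^2)\mathbb{L}^2 = \mathbb{L}^7-\mathbb{L}^4$ to $[\mathbf{S_3}(\lambda)]$, independently of $\lambda$.

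Finally, on $\mathbf{V}$ the odd cubes $n^3+(-n)^3$ and $s^3+(-s)^3$ cancel and $rvz$ vanishes, so $G|_{\mathbf{V}}=0$ and the equation reduces to $-\lambda=0$. Hence $\pi^{-1}(\mathbf{V})$ is $\mathbf{V}\times\mathbb{A}^3$ for $\lambda=0$, contributing $\mathbb{L}^5$, and empty for $\lambda=1$, contributing $0$. Adding the two contributions yields $[\mathbf{S_3}(0)] = \mathbb{L}^7+\mathbb{L}^5-\mathbb{L}^4$ and $[\mathbf{S_3}(1)] = \mathbb{L}^7-\mathbb{L}^4$, whence $[\mathbf{S_3}(0)]-[\mathbf{S_3}(1)] = \mathbb{L}^5$. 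The only point requiring a careful eye is the piecewise trivialization of $\pi$ over $\mathbb{A}^5\setminus\mathbf{V}$, but the three-case split above is exhaustive and each case is transparent; no other step presents a real obstacle.
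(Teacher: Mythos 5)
Your proposal is correct and follows essentially the same route as the paper: the paper's proof is exactly the stratification by $v+s\neq 0$, then $v+s=0,\ n+r\neq 0$, then $v+s=n+r=0,\ z\neq 0$, then all three vanishing, eliminating $p$, $t$, $x$ respectively and picking up the extra $\mathbb{L}^5$ only when $\lambda=0$. Your packaging of the first three cases as a trivialized $\mathbb{A}^2$-bundle over $\mathbb{A}^5\setminus\mathbf{V}$ is just a slightly tidier bookkeeping of the same computation and yields the same motives.
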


\begin{proof} The defining equation of $\mathbf{S_3}(\lambda)$ in $\mathbb{A}^8$ is
\[
\frac{c}{3}(n^3+r^3+s^3+v^3+z^3) + 2rvz + (v+s)p + (n+r)t + czx = \lambda \]

(1) : If $v+s \not= 0$ we can eliminate $p$ from the equation and get a contribution $\mathbb{L}^5(\mathbb{L}^2-\mathbb{L})$ as there are five free variables and $[v+s \not= 0]_{\mathbb{A}^2} = \mathbb{L}^2-\mathbb{L}$. Note that this is independent of the value of $\lambda$.

\vskip 2mm

(2) : If $v+s = 0$ we get the equation
\[
\frac{c}{3}(n^3+r^3+z^3) + 2rvz + (n+r)t + czx = \lambda \]
If we assume that in addition $n+r \not= 0$ we can eliminate $t$, then by an argument as above we obtain a contribution $\mathbb{L}^4(\mathbb{L}^2-\mathbb{L})$, again independent of the value of $\lambda$.

\vskip 2mm

(3) : If $v+s=0$ and $n+r=0$ we get as equation $\frac{c}{3}z^3 + 2rvz + czx = \lambda$. So, if $z \not= 0$ we can eliminate $x$ and get a term $\mathbb{L}^4(\mathbb{L}-1)$, independent of $\lambda$.

\vskip 2mm

(4) : If $v+s=0, n+r=0$ and $z=0$ we get the equation $0 = \lambda$. Hence, if $\lambda=1$ this gives no contribution, but if $\lambda=0$ we get a contribution $\mathbb{L}^5$.

\vskip 2mm

Summing up we get the claimed motives. 
\end{proof}

As we are only interested in the differences $[\mathbf{S_k}(0)]-[\mathbf{S_k}(1)]$ we will in the remaining computations only determine the difference of the motives in those subcases where the result can depend on the value of $\lambda$.

\begin{lemma} With $a=b=1$ and $c \not= 0$ we have 
\[
[\mathbf{S_2}(0)]-[\mathbf{S_2}(1)] = \mathbb{L}^6 + \mathbb{L}^3.[\mu_3].([X_0]-[X_1])
\]
where $X_{\lambda}$ is the locally closed subset in $\mathbb{A}^3$ (with variables $x,y,z$) defined by
\[
X_{\lambda} = \begin{cases}
x \not= 0 \\
x(3 \rho c z^2 - 3 \rho^2 c xz + 6 yz + (c^4+2c)x^2 - 3 \rho c^3 xy + 3 \rho^2 c^2 y^2) = 3 \lambda
\end{cases}
\]
and $\rho^3 = 1$.
\end{lemma}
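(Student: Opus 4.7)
The defining equation of $\mathbf{S_2}(\lambda)\subset\mathbb{A}^9$, after specializing $a=b=1$ and $s=0$, is
\[
\tfrac{c}{3}(n^3+r^3+v^3+w^3+z^3) + 2rvz + y\bigl(pv + (n+r)t + cx(w+z)\bigr) + p(w+z) + (n+r)x + ctv = \lambda.
\]
The plan is to follow the template of Lemma~3 and peel off linearly-occurring variables one at a time: every time a pivot coefficient is nonzero the resulting contribution to $[\mathbf{S_2}(\lambda)]$ is $\lambda$-independent and therefore cancels in the difference $[\mathbf{S_2}(0)]-[\mathbf{S_2}(1)]$, so only strata on which successive pivot coefficients vanish need to be tracked.

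The natural first pivot is $y$, whose coefficient is $E_1:=pv+(n+r)t+cx(w+z)$. On $\{E_1\neq 0\}$ one eliminates $y$, producing a $\lambda$-independent contribution that we discard. On $\{E_1=0\}$, $y$ is free (contributing a factor $\mathbb{L}$) and the equation reduces to $E_2=\lambda$ with $E_2:=\tfrac{c}{3}(n^3+r^3+v^3+w^3+z^3)+2rvz+p(w+z)+(n+r)x+ctv$. Inside this locus I would pivot successively on $p$, $t$ and $x$, which appear linearly in $E_1$ with respective coefficients $v$, $n+r$ and $c(w+z)$: on $\{v\neq 0\}$ solve $E_1$ for $p$ and substitute into $E_2$, then pivot on $t$ using the coefficient $cv^2-(n+r)(w+z)$ that emerges after substitution, and so on. Every such non-degenerate substitution drops out of the difference.

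After this nested case analysis only two residual strata should produce $\lambda$-dependent contributions. The first is maximally-degenerate: every pivot coefficient vanishes, the equation collapses to $0=\lambda$ on an affine space of dimension six, and the contribution $[\{0=0\}]-[\{0=1\}]$ supplies the $\mathbb{L}^6$ summand. The second is a stratum on which the elimination leaves an equation of the form $\tfrac{c}{3}V^3=g-\lambda$ in some natural variable $V$ built from the cell coordinates, where $g$ is a cubic in three residual variables while three further variables remain genuinely free. Solving for $V$ by choosing one of the three cube roots introduces the $[\mu_3]$-factor, the free variables contribute the $\mathbb{L}^3$ factor, and the residual cubic equation in $\mathbb{A}^3$, after a linear change of coordinates and renaming of the variables to $(x,y,z)$, should coincide with the explicit cubic defining $X_\lambda$.

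The main technical obstacle is bookkeeping the nested case split without losing strata, and then carrying out the cube-root substitution cleanly enough to identify the residual cubic with $x(3\rho cz^2-3\rho^2 cxz+6yz+(c^4+2c)x^2-3\rho c^3 xy+3\rho^2 c^2 y^2)=3\lambda$. The unusual coefficient $c^4+2c$ is the telltale of combining a $\tfrac{c}{3}v^3$-term with a $\tfrac{c}{3}(cv)^3$-term and a cross-term $2rvz$, all naturally produced by intermediate relations like $cv^2=(n+r)(w+z)$ which appear during the $p$- and $t$-eliminations; the non-vanishing condition $x\neq 0$ in $X_\lambda$ should reflect the non-vanishing condition on the pivot variable used to reach this deepest stratum.
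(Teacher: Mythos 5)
Your overall strategy is the same as the paper's: stratify $\mathbf{S_2}(\lambda)$ by the successive vanishing of the pivot coefficients (first the coefficient $vp+(n+r)t+c(w+z)x$ of $y$, then $v$, then the coefficients that appear after substitution), discard every stratum on which a variable can be eliminated because the contribution is $\lambda$-independent, and keep only the degenerate strata. Your identification of the maximally degenerate stratum ($v=0$, $n+r=0$, $w+z=0$, equation $0=\lambda$, six free variables, giving $\mathbb{L}^6$) is correct, as is your guess that the condition $x\neq 0$ in $X_\lambda$ is the image of $v\neq 0$ and that $cv^2-(n+r)(w+z)$ is one of the relevant pivot coefficients.

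However, the step that actually produces the term $\mathbb{L}^3[\mu_3]([X_0]-[X_1])$ — which is the whole content of the lemma — is both left unexecuted and misdescribed. No equation of the form $\tfrac{c}{3}V^3=g-\lambda$ ever arises, and ``solving for $V$ by choosing one of the three cube roots'' is not a legitimate motivic operation: a hypersurface $V^3=h$ is a degree-three cover, not a product with $\mu_3$. What actually happens (and what you would have to verify) is that on the stratum $v\neq 0$, after eliminating $p$ from $vp+(n+r)t+c(w+z)x=0$, the coefficients of $x$ and of $t$ in the substituted equation are $v(n+r)-c(w+z)^2$ and $cv^2-(n+r)(w+z)$; requiring both to vanish yields the $\lambda$-independent relation $v^3=(w+z)^3$, so the stratum splits into the three disjoint isomorphic pieces $w+z=\rho v$, $n=c\rho^2 v-r$, $\rho^3=1$ — this is the source of the factor $[\mu_3]$, with $y,x,t$ remaining free and giving $\mathbb{L}^3$. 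The equation $=\lambda$ is never solved; it survives as $\tfrac{c}{3}(n^3+r^3+v^3+w^3+z^3)+2rvz=\lambda$, and only after substituting $w=\rho v-z$, $n=c\rho^2 v-r$ does one obtain the displayed cubic $v(3\rho cz^2-3\rho^2cvz+6rz+(c^4+2c)v^2-3\rho c^3rv+3\rho^2c^2r^2)=3\lambda$, i.e.\ $X_\lambda$ in the variables $(v,r,z)$. You also need to check (as the paper does in its cases $v=0$, $w+z\neq 0$ and $w+z=0$, $n+r\neq 0$) that the $v=0$ branch produces no further $\lambda$-dependent strata beyond the $\mathbb{A}^6$ one; you assert this but do not argue it. As it stands the proposal is a plausible plan with the correct skeleton, but the decisive computation identifying $X_\lambda$ and the correct origin of the $[\mu_3]$ factor are missing.
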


\begin{proof} The defining equation of $\mathbf{S_2}(\lambda)$ in $\mathbb{A}^9$ is
\[
\frac{c}{3}(n^3+r^3+v^3+w^3+z^3)+2rvx+(vp+(n+r)t + c(z+w)x)y+ \]
\[
((r+n)x+(w+z)p+cvt) = \lambda \]

\vskip 2mm

(1) : If $vp+(n+r)t + c(z+w)x \not= 0$ we can eliminate $y$ from the equation, independent of the value of $\lambda$.

\vskip 2mm

(2) : If $vp+(n+r)t + c(z+w)x = 0$ and $v \not= 0$ we have 
\[
p = -\tfrac{n+r}{v}t-c \tfrac{z+w}{v}x \]
 and after substitution the equation becomes
\[
\frac{c}{3}(n^3+r^3+v^3+w^3+z^3)+2rvx+((r+n)-c\frac{(z+w)^2}{v})x + (cv-\frac{(n+r)(w+z)}{v})t = \lambda \]
If $v(r+n)-c(w+z)^2 \not= 0$ we can eliminate $x$ from the equation, and the remaining motive to consider, that is,
\[
[ vp+(n+r)t+c(z+w)x=0,v \not=0, v(r+n)-c(w+z)^2 \not= 0]_{\mathbb{A}^7} \]
does not depend on $\lambda$. 

If $v(r+n)-c(w+z)^2 = 0$ but $cv^2-(n+r)(w+z) \not= 0$ we can eliminate $t$, and again the resulting motive independent of $\lambda$, so does not contribute. 

\vskip 2mm

(3) : We arrive at the first subcase which depends on $\lambda$. The defining equations of the locally closed subset of $\mathbb{A}^5$ (we have eliminated $p$ and the variables $y,x$ and $t$ are free) are
\[
\begin{cases}
v \not= 0 \\
v(r+n)-c(w+z)^2 = 0 \\
cv^2-(n+r)(w+z) = 0 \\
\frac{c}{3}(n^3+r^3+v^3+w^3+z^3)+2rvz = \lambda
\end{cases} \]
From the first equation we obtain $r+n = \frac{c(w+z)^2}{v}$, and substituting this in the second equation gives 
\[
v^3=(w+z)^3 \qquad \text{ whence} \qquad \begin{cases} w = \rho v -z \\
 n = c \rho^2 v -r \end{cases} \]
for $\rho^3=1$, so we have three subcases to consider which are clearly isomorphic, giving a factor $[ \mu_3]$. 

If we substitute the obtained equations in the last equation, we obtain the locally closed subset in $\mathbb{A}^3$ (with remaining coefficients $r,v,z$)
\[
X_{\lambda} = \begin{cases}
v \not= 0 \\
v(3 \rho c z^2 - 3 \rho^2 c vz + 6 rz + (c^4+2c)v^2 - 3 \rho c^3 rv + 3 \rho^2 c^2 r^2) = 3 \lambda
\end{cases}
\]
Therefore, this subcase contributes a term equal to
\[
\mathbb{L}^3.[\mu_3].([X_0]-[X_1]) \]

\vskip 2mm

(4) : We have exhausted the $v \not= 0$ case, so from now on $v=0$ and we have to solve in $\mathbb{A}^7$
\[
\begin{cases} 
(n+r)t + c(z+w)x = 0 \\
\frac{c}{3}(n^3+r^3+w^3+z^3)+(r+n)x+(w+z)p = \lambda
\end{cases}
\]
If $w+z \not= 0$ we can eliminate $x$ from the first equation, substitute it in the second and eliminate $p$ from the second, all this independent of $\lambda$.

\vskip 2mm

(5) : If $w+z=0$ we have
\[
\begin{cases} (n+r)t = 0 \\ 
\frac{c}{3}(n^3+r^3)+(r+n)x = \lambda
\end{cases}
\]
So, if $r+n \not= 0$ we must have that $t=0$ and can eliminate $x$ from the second equation, independent of $\lambda$.

\vskip 2mm

(6) : The remaining case is when $y,x,t$ and $p$ are free variables and we have
\[
\begin{cases}
 v = 0 \\
  w+z =0 \\
  r+n = 0 
  \end{cases} \]
   and the remaining equation is $0 = \lambda$. So, for $\lambda = 1$ we get no contribution, whereas for $\lambda = 0$ we get a contribution $\mathbb{L}^6$.
\end{proof}

\begin{lemma} With $a=b=1$ and $c \not= 0$ we have 
\[
[\mathbf{S_1}(0)]-[\mathbf{S_1}(1)] = \mathbb{L}^7 + \mathbb{L}^3.[\mu_3].([X_0]-[X_1])
\]
where $X_{\lambda}$ is the locally closed subset in $\mathbb{A}^3$ (with variables $x,y,z$) defined by
\[
X_{\lambda} = \begin{cases}
x \not= 0 \\
x(3 \rho c z^2 - 3 \rho^2 c xz + 6 yz + (c^4+2c)x^2 - 3 \rho c^3 xy + 3 \rho^2 c^2 y^2) = 3 \lambda
\end{cases}
\]
and $\rho^3 = 1$.
\end{lemma}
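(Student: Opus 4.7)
The plan is to follow the successive-elimination strategy of Lemma~5 (the $\mathbf{S_2}$ computation), with an additional preliminary case-split reflecting the fact that cell~1 has two linear variables ($u$ and $y$) rather than one. Write $S = s+v$, $W = w+z$ and $g = \tfrac{c}{3}(r^3+s^3+v^3+w^3+z^3) + 2rvz$. With $a=b=1$ and $n=0$, the defining equation of $\mathbf{S_1}(\lambda) \subset \mathbb{A}^{10}$ takes the form
\[
g + u(rx + cSt + Wp) + y(rt + cWx + Sp) + (Sx + Wt + crp) = \lambda,
\]
and the coefficients $Q_u, Q_y$ of $u, y$ are linear forms in $(x, t, p)$ whose rows, together with the row $(S, W, cr)$ of the constant part, assemble into the $3\times 3$ matrix $M$ with rows $(r, cS, W)$, $(cW, r, S)$, $(S, W, cr)$.

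First I will split on $Q_u$: if $Q_u \neq 0$, eliminate $u$ and get a $\lambda$-independent contribution; if $Q_u = 0$ the variable $u$ becomes free (factor $\mathbb{L}$). Inside $Q_u=0$ I split on $Q_y$: if $Q_y \neq 0$, eliminate $y$ ($\lambda$-independent); if $Q_y=0$ then $y$ is also free (total factor $\mathbb{L}^2$), and the residual problem reduces to a linear-algebra analysis of exactly the same shape as in Lemma~5. Pivoting on $S \neq 0$ (the cases $S=0$ and $W=0$ are handled symmetrically by permuting variables, as in the previous lemma), I use $Q_u = 0$ to eliminate $t$ and then analyse the resulting $2\times 2$ system in $(x,p)$ by case-splitting on the minors $A = r^2 - c^2 SW$ and $B = cS^2 - rW$.

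Two $\lambda$-dependent subcases survive, in direct analogy with Lemma~5. The \emph{rank-drop} subcase, in which all the pivots collapse simultaneously and the constraints force $W^3 = S^3$ and $r^3 = c^3 S^3$, i.e.\ $W = \rho S$ and $r = c\rho^2 S$ for some $\rho \in \mu_3$, produces three branches. On each of them two of $(r,s,v,w,z)$ are eliminated, and the condition $g = \lambda$ reduces, by exactly the same algebraic substitution as in Lemma~5, to the polynomial equation defining $X_\lambda$ (with $(S, v, z)$ playing the role of $(x, y, z)$). Together with the $\mathbb{L}^2$ factor from $u, y$ being free and one additional free direction from the kernel of the rank-deficient system, this contributes $\mathbb{L}^3 \cdot [\mu_3] \cdot ([X_0] - [X_1])$. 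The \emph{maximally degenerate} subcase $r = S = W = 0$ forces $g \equiv 0$ and $Q_u = Q_y = Q_q \equiv 0$, so the equation becomes $0 = \lambda$; with $u, y, x, t, p$ and the two free parameters among $(s, v, w, z)$ (under $v = -s$, $z = -w$) all unconstrained, this contributes $\mathbb{L}^7$ for $\lambda = 0$ only. The extra $\mathbb{L}$ compared to the $\mathbb{L}^6$ of Lemma~5 is precisely the contribution of the new free variable~$u$.

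The main obstacle will be the bookkeeping of sub-cases: one must verify that the non-degenerate subcases (where some minor is nonzero) give $\lambda$-independent motives after passing to the Grothendieck class, and that the substitution on the $\mu_3$-branches of the rank-drop locus produces exactly the same polynomial $X_\lambda$ as in the previous lemma rather than a variant of it. Both tasks amount to the same algebraic manipulations already carried out in Lemma~5, and they go through by the cyclic symmetry of $M$ in $(r, S, W)$.
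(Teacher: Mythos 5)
Your route genuinely diverges from the paper's at one crucial point, and the divergence opens a real gap. The paper never imposes $Q_y=0$: after restricting to $Q_u=0$ (so that only $u$ is free) it keeps $y$ as an ordinary coordinate, eliminates $t$ using the coefficient $ry+(z+w)$, then $x$ via $Q_u=0$ when $r\neq 0$, and the degeneracy conditions it is then left with, $c(z+w)^2(v+s)-r(v+s)^2=0$ and $c(z+w)^3-2r(z+w)(s+v)+cr^3=0$, do collapse (for $s+v\neq 0$) to three linear branches $w+z=\rho r$, $s+v=c\rho^2 r$, which is exactly what produces the factor $[\mu_3]$ and the surface $X_\lambda$. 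In your scheme, where both $u$ and $y$ are freed, the residual problem is the $3\times 3$ linear system in $(x,t,p)$ with matrix $M$ having rows $(r,cS,W)$, $(cW,r,S)$, $(S,W,cr)$ (with $S=s+v$, $W=w+z$), and the $\lambda$-dependent locus is where the third row lies in the span of the first two. When the first two rows are independent (e.g. $A=r^2-c^2SW\neq 0$) this condition is $\det M=c(r^3+S^3+W^3)-(2+c^3)rSW=0$, i.e. the affine cone over the elliptic curve $E_{pt}$ in the coordinates $(r,S,W)$ --- an irreducible cubic for the admissible $c$, not the union of the three planes $W=\rho S$, $r=c\rho^2 S$ (those give just three points of the cubic; moreover your residual rank-one case $A=B=C=0$ with $S\neq0$ is empty unless $c^3=1$). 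So the assertion that the rank-drop constraints "force $W^3=S^3$ and $r^3=c^3S^3$" is false in your decomposition, and the claimed reduction of $g=\lambda$ to $X_\lambda$ "by exactly the same substitution as in Lemma 5" does not follow. What your route actually requires is the motivic difference of the fibres of $g$ over the whole cone $\{\det M=0,\ S\neq 0,\ A\neq 0\}$ (weighted by $\mathbb{L}^3$), a different and substantially harder computation that you have not carried out and whose answer is not visibly $[\mu_3]([X_0]-[X_1])$.

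A secondary problem is the appeal to symmetry: the cases $S=0$ (and the terminal $r=0$ analysis) cannot be dispatched "by permuting variables, as in the previous lemma". The paper uses no such symmetry --- it treats $v=0$, resp. $r=0$, by a separate direct analysis --- and inside cell 1 there is none to use: $r$ is a single matrix entry while $S$ and $W$ are traces, $g$ contains the asymmetric term $2rvz$, and $M$ is not preserved by cyclically permuting $(r,S,W)$ together with $(x,t,p)$ because of where the factors $c$ sit. Your maximally degenerate case $r=S=W=0$ giving $\mathbb{L}^7$ at $\lambda=0$ does agree with the paper's final step, but to obtain the stated formula the intermediate $\lambda$-dependent analysis has to be organised as in the paper (free $u$ only, pin $y$ by $ry+(z+w)=0$, then eliminate $x$, $t$, $p$ successively), or else the elliptic-cone contribution above must be evaluated directly; as written, the proposal does neither.
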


\begin{proof}
The defining equation of $\mathbf{S_1}(\lambda)$ in $\mathbb{A}^{10}$ is equal to
\[
\frac{c}{3}(r^3+s^3+v^3+w^3+z^3)+2 rvz + ((w+z)p+c(v+s)t+r x)u + \]
\[
 ((v+s)p+r t + c(z+w)x)y + (crp + (z+w)t + (s+v)x) = \lambda \]
 Again, we will split the computations is subcases and only work out those for which the difference of motives may depend on $\lambda$.
 
 \vskip 3mm
 
 (1) : If $(w+z)p+c(v+s)t+r x \not= 0$ we can eliminate $u$ from the equation, independent of the value of $\lambda$.
 
 \vskip 2mm
 
 (2) : If $(w+z)p+c(v+s)t+r x = 0$, $u$ is a free variable and the equation becomes
 \[
 \frac{c}{3}(r^3+s^3+v^3+w^3+z^3)+2 rvz+((v+s)p+r t + c(z+w)x)y + (crp + (z+w)t + (s+v)x) = \lambda \]
  If $ry + (z+w) \not= 0$ we can eliminate $t$ from the equation, independent of $\lambda$.
  
  \vskip 2mm
  
  (3) : If $(w+z)p+c(v+s)t+r x = 0$ and $ry + (z+w) = 0$ and $r \not= 0$, then we have the equations
  \[
  \begin{cases}
  y = - \frac{z+w}{r} \\
  x=- \frac{w+z}{r}p-\frac{c(v+s)}{r}t
  \end{cases}
  \]
   and substitution gives us the equation
  \[
   \frac{c}{3}(r^3+s^3+v^3+w^3+z^3)+2 rvz - \frac{z+w}{r}((v+s)p + c(z+w)(- \frac{w+z}{r}p-\frac{c(v+s)}{r}t) + \]
   \[
   (crp  + (s+v)(- \frac{w+z}{r}p-\frac{c(v+s)}{r}t)) = \lambda \]
The coefficient of $t$ is equal to $-\frac{c(v+s)^2}{r}+\frac{z+w}{r}\frac{c^2(z+w)(v+s)}{r}$. Hence, if $c(z+w)^2(v+s)-r(v+s)^2 \not= 0$ we can eliminate $t$ from the equation, independent of $\lambda$.

\vskip 2mm

(4) : If $r \not= 0$, $(w+z)p+c(v+s)t+r x = 0$ and $ry + (z+w) = 0$ and $c(z+w)^2(v+s)-r(v+s)^2 = 0$, the equation becomes
\[
 \frac{c}{3}(r^3+s^3+v^3+w^3+z^3)+2 rvz +(\frac{c(z+w)^3}{r^2}-2\frac{(z+w)(s+v)}{r}+cr)p = \lambda \]
That is, if $c(z+w)^3-2r(z+w)(s+v)+cr^3 \not= 0$ we can eliminate $p$, independent of $\lambda$.

\vskip 2mm

(5) : The first subcase dependent on $\lambda$ is now that $u,p$ and $t$ are free variables and we have the following locally closed subset of $\mathbb{A}^5$ (in the remaining variables $r,s,v,w,z$)
\[
\begin{cases}
r \not= 0 \\
c(z+w)^2(v+s)-r(v+s)^2 = 0 \\
c(z+w)^3-2r(z+w)(s+v)+cr^3 = 0 \\
  \frac{c}{3}(r^3+s^3+v^3+w^3+z^3)+2 rvz = \lambda
  \end{cases}
  \]
  If $v+s \not= 0$ we have $r(v+s)=c(z+w)^2$ and substituting in the third equation gives $r^3=(z+w)^3$ whence $z+w = \rho r$ for $\rho^3=1$, but then also $c \rho^2 r = v+s$. If we substitute 
  \[
  \begin{cases} w = \rho r -z \\
  s = c \rho^2 -v
  \end{cases}
  \]
  in the last equation, we get the locally closed subset in $\mathbb{A}^3$, isomorphic to $X_{\lambda}$ of the previous case (interchanging the variables $r$ and $v$)
  \[
 X_{\lambda} = \begin{cases}
  r \not= 0 \\
  r(3 \rho c z^2+ 6 vz -3 \rho^2 c rz + 3 \rho^2 c^2 v^2-3 \rho c^3 rv + (c^4+2c) r^2) = \lambda
  \end{cases}
  \]
  Therefore, this subcase contributes a term equal to
\[
\mathbb{L}^3.[\mu_3].([X_0]-[X_1]) \]

\vskip 2mm  
  
(6) : From now on we may assume that $r=0$, together with $(w+z)p+c(v+s)t+rx=0$ and $ry+(z+w)=0$. But then, $z+w=0$ and the conditions are equivalent to the following system of equations in $\mathbb{A}^6$ (in the variables $s,t,v,p,x,y$).  Observe that we have $u$ and $w$ as extra free variables
\[
\begin{cases}
c(s+v)t = 0 \\
\frac{c}{3}(s^3+v^3)+(s+v)py + (s+v)x = \lambda
\end{cases}
\]
If $s+v \not= 0$ we have $t=0$ and can eliminate $x$ from the last equation, independent of $\lambda$.

\vskip 2mm

(7) : If $s+v=0$ we have $u,w,t,p,y,x,s$ as free variables and the remaining condition is $0 = \lambda$. That is, if $\lambda=1$ there is no contribution and for $\lambda=0$ we get a term $\mathbb{L}^7$.
\end{proof}

Summing up the three contributions, we have:

\begin{lemma} For the Brauer-Severi motives we have
\[
[\mathbf{BS}^W_2(0)]-[\mathbf{BS}^W_2(1)] = \mathbb{L}^7+\mathbb{L}^6+\mathbb{L}^5+2 \mathbb{L}^3 [ \mu_3] ([X_0]-[X_1]) \]
Therefore, the coefficient of $t^2$ in the series $U_W(t)$ is equal to
\[
\mathbb{L}^{-4} \frac{[\mathbb{M}^W_2(0)]-[\mathbb{M}^W_2(1)]}{[GL_2]} = 
\frac{\mathbb{L}(\mathbb{L}^3-1) + 2 [\mu_3]([X_0]-[X_1]) \mathbb{L}^{-1}(\mathbb{L}-1) + \mathbb{L}^{-2}N_1^2}{(\mathbb{L}^2-1)(\mathbb{L}-1)} \] 
\end{lemma}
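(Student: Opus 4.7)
The statement packages two assertions, both of which follow by elementary bookkeeping from what is already in place; the body of the paper has done the hard work in the three preceding lemmas.

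For the first formula, the plan is simply to combine the cellular decomposition $\mathbf{BS}^W_{3,2}(\lambda) = \mathbf{S_1}(\lambda) \sqcup \mathbf{S_2}(\lambda) \sqcup \mathbf{S_3}(\lambda)$ with the scissor relation, obtaining
\[
[\mathbf{BS}^W_2(0)]-[\mathbf{BS}^W_2(1)] = \sum_{i=1}^{3}\bigl([\mathbf{S_i}(0)]-[\mathbf{S_i}(1)]\bigr),
\]
and to substitute the three computed values $\mathbb{L}^5$, $\mathbb{L}^6+\mathbb{L}^3[\mu_3]([X_0]-[X_1])$ and $\mathbb{L}^7+\mathbb{L}^3[\mu_3]([X_0]-[X_1])$ from the previous lemmas. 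The two $[\mu_3]$-contributions collect into the factor $2$.

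For the coefficient of $t^2$, I would invoke the Brauer-Severi recursion displayed just before Lemma~3, specialized to $n=2$:
\[
(\mathbb{L}^2-1)\frac{[\mathbb{M}^W_2(0)]-[\mathbb{M}^W_2(1)]}{[GL_2]} = \bigl([\mathbf{BS}^W_2(0)]-[\mathbf{BS}^W_2(1)]\bigr)+\frac{\mathbb{L}^2}{\mathbb{L}-1}\bigl([\mathbb{M}^W_1(0)]-[\mathbb{M}^W_1(1)]\bigr)^2.
\]
By the identifications $[\mathbb{M}^W_1(0)]=(\mathbb{L}-1)[E_{DT}]+1$ and $[\mathbb{M}^W_1(1)]=[S_{DT},\mu_3]$ recalled in the paragraph preceding Lemma~3, the bracketed $n=1$ difference is exactly $N_1$. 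Substituting the first formula of this lemma on the right and dividing through by $(\mathbb{L}^2-1)$ then multiplying by $\mathbb{L}^{-4}$ yields three summands which I expect to match the three summands of the target expression after being rewritten over the common denominator $(\mathbb{L}^2-1)(\mathbb{L}-1)$.

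The only obstacle is routine simplification over the motivic localization ring. The key identity is $\mathbb{L}^7+\mathbb{L}^6+\mathbb{L}^5 = \mathbb{L}^5(\mathbb{L}^2+\mathbb{L}+1)=\mathbb{L}^5(\mathbb{L}^3-1)/(\mathbb{L}-1)$, which after multiplication by $\mathbb{L}^{-4}/(\mathbb{L}^2-1)$ delivers the first summand $\mathbb{L}(\mathbb{L}^3-1)/[(\mathbb{L}^2-1)(\mathbb{L}-1)]$; the $[\mu_3]$-term and the $N_1^2$-term fall out directly after cancelling powers of $\mathbb{L}$ against the $\mathbb{L}^{-4}$ prefactor and inserting a compensating factor of $(\mathbb{L}-1)$ in numerator and denominator.
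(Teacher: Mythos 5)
Your proposal is correct and matches the paper's (implicit) argument exactly: the paper also obtains the first formula by summing the three $[\mathbf{S_i}(0)]-[\mathbf{S_i}(1)]$ contributions from the preceding lemmas, and the $t^2$-coefficient by inserting this into the $n=2$ Brauer--Severi recursion with $[\mathbb{M}^W_1(0)]-[\mathbb{M}^W_1(1)]=N_1$. Your simplification $\mathbb{L}^{-4}(\mathbb{L}^7+\mathbb{L}^6+\mathbb{L}^5)/(\mathbb{L}^2-1)=\mathbb{L}(\mathbb{L}^3-1)/[(\mathbb{L}^2-1)(\mathbb{L}-1)]$ and the handling of the remaining two summands are exactly the routine algebra the paper leaves to the reader.
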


\vskip 4mm

Remains to compute the motives $[X_{\lambda}]$ where
\[
X_{\lambda} = \begin{cases} x \not= 0 \\
x.(\rho c z^2- \rho^2 c x z + 2 y z + \frac{c^4+2 c}{3}x^2 - \rho c^3 x y + \rho^2 c^2 y^2)=\lambda
\end{cases}
\]
After performing the linear change of variables
\[
\begin{cases}
X = \sqrt{\frac{c^4+8c}{12}} x + i \frac{\sqrt{(c^3-1) \rho}}{c} z \\
Y = -\frac{c^2}{2} x + \rho c y + \frac{\rho^2}{c} z \\
Z = \sqrt{\frac{c^4+8c}{12}} x - i \frac{\sqrt{(c^3-1) \rho}}{c} z
\end{cases}
\]
we can express
\[
X_{\lambda} = \begin{cases}
X+Z \not= 0 \\
(X+Z)(Y^2+XZ) = \lambda
\end{cases} \]

\begin{lemma} With notations as above we have
\[
[ X_0 ] = (\mathbb{L}-1)^2 \quad \text{and} \quad [X_1] = (\mathbb{L}-1)^2 + [\mu_3] \mathbb{L} \]
\end{lemma}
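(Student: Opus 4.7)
The plan is to introduce one linear change of coordinates that puts both $X_0$ and $X_1$ in a shape amenable to stratification, and then distinguish the two values of $\lambda$ only at the last step. Set
\[
a = X+Z, \qquad b = X-Z, \qquad s = b + 2Y, \qquad t = b - 2Y.
\]
This is a linear automorphism of $\mathbb{A}^3$, so $(a,s,t)$ are new coordinates in place of $(X,Y,Z)$. Since $XZ = (a^2-b^2)/4$ and $b^2 - 4Y^2 = st$, one has $Y^2 + XZ = (a^2 - st)/4$, so the defining condition $(X+Z)(Y^2+XZ) = \lambda$ with $X+Z \neq 0$ becomes the locally closed condition
\[
a \neq 0, \qquad ast = a^3 - 4\lambda.
\]

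For $\lambda = 0$ the equation reads $st = a^2$. Since $a^2$ is nowhere zero on $\{a \neq 0\}$, we automatically have $s \in \mathbb{G}_m$ and $t = a^2/s$ is determined by $(a,s)$, so projection $(a,s,t) \mapsto (a,s)$ identifies $X_0$ with $\mathbb{G}_m \times \mathbb{G}_m$. Hence $[X_0] = (\mathbb{L}-1)^2$.

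For $\lambda = 1$ the equation is $ast = a^3 - 4$, and I would stratify by the vanishing of the right-hand side. On the locus $\{a^3 = 4\}$, the three roots form a $\mu_3$-torsor under the natural scaling action inherited from the homogeneity of $W$ and the linearity of the previous changes of variable, so as an equivariant class this zero-dimensional subscheme equals $[\mu_3]$; the fibre in $(s,t)$ over each such $a$ is the affine cross $\{st = 0\} \subset \mathbb{A}^2$ with class $2\mathbb{L}-1$, contributing $[\mu_3](2\mathbb{L}-1)$. On the complementary locus $\{a \in \mathbb{G}_m,\ a^3 \neq 4\}$, whose class is $(\mathbb{L}-1) - [\mu_3]$, the fibre is a smooth hyperbola $\{st = c\}$ with $c \neq 0$, isomorphic to $\mathbb{G}_m$, contributing $\bigl((\mathbb{L}-1) - [\mu_3]\bigr)(\mathbb{L}-1)$. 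Summing,
\[
[X_1] = \bigl((\mathbb{L}-1) - [\mu_3]\bigr)(\mathbb{L}-1) + [\mu_3](2\mathbb{L}-1) = (\mathbb{L}-1)^2 + [\mu_3]\,\mathbb{L}.
\]

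The only subtle step is checking the equivariant class of the three-point scheme $\{a^3 = 4\}$: one has to verify that the $\hat\mu$-action on the fibre $Tr(W)^{-1}(1)$ arising from the Denef--Loeser setup restricts, through all the linear changes of variable, to the standard scaling $\mu_3$-action on the $a$-coordinate, which it does because $W$ and every subsequent substitution is weight-homogeneous. The rest is a routine $\mathbb{G}_m$-fibration computation.
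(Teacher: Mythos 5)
Your proof is correct and follows essentially the same route as the paper: fibering over $a = X+Z \neq 0$, with the special locus $a^3 = 4$ (equivariant class $[\mu_3]$) carrying degenerate fibers of class $2\mathbb{L}-1$ and its complement carrying smooth fibers of class $\mathbb{L}-1$, giving $[X_1]=((\mathbb{L}-1)-[\mu_3])(\mathbb{L}-1)+[\mu_3](2\mathbb{L}-1)$. The only cosmetic differences are your extra linear substitution putting the fibers in the form $st=\mathrm{const}$ and your fibration treatment of $X_0$, where the paper instead uses a scissor computation on the quadric cone $Y^2+XZ=0$.
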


\begin{proof} We have $[X_0]=[Y^2+XZ=0]_{\mathbb{A}^3} - [Y^2+XZ=0,X+Z=0]_{\mathbb{A}^3}$ which equals
\[
[Y^2+XZ=0]_{\mathbb{A}^3} - [(X+Y)(X-Y)=0]_{\mathbb{A}^2} = \mathbb{L}^2 - (2 \mathbb{L} -1) \]
As for $X_1$, we have for every $X+Z=a \not= 0$ 
\[
[Y^2 -X^2 + aX = \frac{1}{a}]_{\mathbb{A}^2} = \begin{cases} \mathbb{L}-1~\text{if $a^3 \not= 4$} \\ 2 \mathbb{L} -1~\text{if $a^3=4$} \end{cases} \]
as this is the affine part of a quadric $Y^2-X^2+aXU-\frac{1}{a}U^2=0$ in $\mathbb{P}^2$, having two points at infinity $U=0$, for every $a \not= 0$. The quadric has a unique singular point $[\tfrac{a}{2}:0:1]$ if and only if $a^3=4$.Therefore, 
\[
[X_1]=(\mathbb{L}-1-[\mu_3])(\mathbb{L}-1) + [\mu_3](2 \mathbb{L}-1). \]
\end{proof}

\begin{theorem} For the quaternionic Sklyanin algebra $S_{1,1,c}$ we have that the coefficient of the second term in the motivic Donaldson-Thomas series $U_W(t)$ is equal to
\[
\mathbb{L}^{-4} \frac{[\mathbb{M}^W_2(0)]-[\mathbb{M}^W_2(1)]}{[GL_2]} = 
\frac{\mathbb{L}(\mathbb{L}^3-1)  -2 [\mu_3]^2 (\mathbb{L}-1) + \mathbb{L}^{-2}N_1^2}{(\mathbb{L}^2-1)(\mathbb{L}-1)} \] 
\end{theorem}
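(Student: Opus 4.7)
The theorem is, at this stage, essentially a substitution: the preceding lemma (summing the three Brauer-Severi contributions) already expresses
\[
\mathbb{L}^{-4}\,\frac{[\mathbb{M}^W_2(0)]-[\mathbb{M}^W_2(1)]}{[GL_2]}
=\frac{\mathbb{L}(\mathbb{L}^3-1)+2[\mu_3]([X_0]-[X_1])\mathbb{L}^{-1}(\mathbb{L}-1)+\mathbb{L}^{-2}N_1^2}{(\mathbb{L}^2-1)(\mathbb{L}-1)},
\]
so the only thing left is to plug in the values of $[X_0]$ and $[X_1]$ coming from the previous lemma.

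The plan is first to record
\[
[X_0]-[X_1]=(\mathbb{L}-1)^2-\bigl((\mathbb{L}-1)^2+[\mu_3]\mathbb{L}\bigr)=-[\mu_3]\,\mathbb{L},
\]
which follows immediately from the computation of $[X_0]=(\mathbb{L}-1)^2$ and $[X_1]=(\mathbb{L}-1)^2+[\mu_3]\mathbb{L}$ established in the preceding lemma via the linear change of variables that rewrites the defining equation as $(X+Z)(Y^2+XZ)=\lambda$ on the open set $\{X+Z\neq 0\}$.

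Next I substitute this into the numerator of the boxed formula. The middle term becomes
\[
2[\mu_3]\cdot\bigl(-[\mu_3]\mathbb{L}\bigr)\cdot\mathbb{L}^{-1}(\mathbb{L}-1)=-2[\mu_3]^2(\mathbb{L}-1),
\]
which, inserted in place of $2[\mu_3]([X_0]-[X_1])\mathbb{L}^{-1}(\mathbb{L}-1)$, yields exactly the expression stated in the theorem. There is no real obstacle; the content of the theorem is that all the non-trivial motivic information has already been packaged into $N_1$ and into the symbolic contribution $[\mu_3]^2$, and the substitution just makes this visible. In particular I should verify that the $(\mathbb{L}-1)$-factor appearing after substitution does not cancel against the denominator in any misleading way, and that the sign is correct; both checks are immediate from the displayed arithmetic above.
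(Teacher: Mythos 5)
Your proposal is correct and matches the paper's (implicit) argument exactly: the theorem follows by substituting $[X_0]-[X_1]=-[\mu_3]\mathbb{L}$, obtained from the lemma computing $[X_0]=(\mathbb{L}-1)^2$ and $[X_1]=(\mathbb{L}-1)^2+[\mu_3]\mathbb{L}$, into the formula of the lemma that sums the three Brauer-Severi contributions, turning the middle term into $-2[\mu_3]^2(\mathbb{L}-1)$. The sign and bookkeeping in your substitution are right, so nothing further is needed.
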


\end{document}